\newtheorem{dummy}{anything}[section]
\newtheorem{theorem}[dummy]{Theorem}
\newtheorem{lemma}[dummy]{Lemma}
\theoremstyle{definition}
\newtheorem{remark}[dummy]{Remark}
\newcommand{\bZ}{\mathbb Z}
\newcommand{\bR}{\mathbb R}
\def\:{\mkern 1.2mu \colon}
\newcommand{\mmatrix}[4]{\left (\vcenter
{\xymatrix@C-2pc@R-2pc{#1&#2\\#3&#4} } \right )}
\numberwithin{equation}{section} 
\begin{document}
\title[Decomposing Perfect Discrete Morse Functions]
{Decomposing Perfect Discrete Morse Functions on Connected Sum of $3$-Manifolds}

\subjclass[2010]{Primary:57R70, 37E35; Secondary:57R05}
\keywords{Perfect discrete Morse function, discrete vector field, connected sum.}

\author{Ne\v{z}a Mramor Kosta, Mehmetc{\.I}k Pamuk and Han{\.I}fe Varl{\i}}

\address{Faculty of Computer and Information Science 
\newline\indent
and Institute of Mathematics, Physics and Mechanics, 
\newline\indent
University of Ljubljana
\newline\indent
Ljubljana, Jadranska 19, Slovenia} \email{neza.mramor@fri.uni-lj.si}

\address{Department of Mathematics
\newline\indent
Middle East Technical University
\newline\indent
Ankara 06531, Turkey} \email{mpamuk{@}metu.edu.tr}

\address{Department of Mathematics
\newline\indent
Middle East Technical University
\newline\indent
Ankara 06531, Turkey}  \email{varlihanife55@gmail.com}

\date{\today}

\begin{abstract}\noindent 
In this paper, we show that if a closed, connected, oriented $3$-manifold $M=M_{1}\# M_{2}$ admits a perfect discrete Morse 
function, then one can decompose this function as perfect discrete Morse functions on $M_1$ and $M_2$.  We also give an explicit construction  
of a separating sphere on $M$ corresponding to such a decomposition.		
\end{abstract}

\maketitle

\section{INTRODUCTION}
In the $1990$s, Robin Forman \cite{forman1} proposed a discrete version of Morse theory called discrete Morse 
theory.  Discrete Morse theory provides a tool for studying the topology of discrete objects via critical cells of a discrete Morse function, which is in many ways analogous to the study of the topology of a manifold via critical points of a differentiable function on it.  
A discrete Morse function on a cell complex is an assignment of a real number to each cell in such a way that the natural 
order given by the dimension of cells is respected, except at most in one (co)face for each cell.

A perfect discrete Morse function is a very special discrete Morse function where the number of critical cells in each dimension is equal to the corresponding Betti number of the complex. Such functions are very useful since they provide an efficient way of computing homology (\cite{GV}) and have many other applications (see \cite{KB} -- \cite{JK}).
The purpose of this paper is to study $\bZ$-perfect discrete Morse functions on the connected sum of triangulated closed, connected and oriented 
$3$-manifolds.  It extends the results of \cite{NMH}, where a proof in the $2$-dimensional case was given. 
In dimension $2$, we showed that a $\bZ$-perfect discrete Morse function on a connected sum of closed, connected and oriented surfaces 
can be decomposed as a $\bZ$-perfect discrete Morse function on each summand \cite[Theorem 4.4]{NMH}.  We also gave a method for constructing a 
separating circle on the connected sum 
\cite[Theorem 4.6]{NMH}.  
In this paper, we consider the problem of decomposing $\bZ$-perfect discrete Morse functions on the connected sum $M=M_1\# M_2$ of closed, connected, oriented, 
triangulated $3$-manifolds.  We give an explicit construction of a separating sphere $S$, such that one can easily obtain each prime factor of the connected sum.  
Our methods do not extend to higher dimensions directly for several reasons. First of all, we rely strongly on duality which connects 1 and 2-dimensional homology generators in a way, specific for dimension 3. Second, classifyng the separating manifold as sphere is much more difficult. Also, the decomposition of a manifold 
into its prime factors is not unique in higher dimensions.  

In dimension $3$, it is known that not all $3$-manifolds admit a perfect discrete Morse function. 
For example, a $3$-manifold whose fundamental group contains a torsion subgroup, such as any lens space, cannot admit any $\bZ$-perfect discrete Morse function \cite{Rayala}.
On the other hand, there is a large class of $3$-manifolds that do admit such a function, for example any 3-manifold of the type $F\times S^1$ where $F$ is a closed, connected, oriented surface, or any connected sum of such components. Our methods extend to perfect discrete Morse functions over other coefficients which are supported by other classes of manifolds.

The paper is organized as follows: In section~$2$, we recall some basic notions of discrete Morse theory and introduce notation.  In Section~$3$ we construct a separating sphere on the connected sum of closed, connected, oriented and triangulated $3$-manifolds with a certain condition on the discrete vector field. Finally, in Section~$4$ we show how to  decompose a $\bZ$-perfect discrete Morse function on the connected sum as $\bZ$-perfect discrete Morse functions on each summand. 



\section{Preliminaries}
In this section we recall necessary basic notions of discrete Morse theory.  For more details we refer the reader to \cite{forman1} and \cite{forman2}.
Throughout this section $K$ denotes a finite regular cell complex.  We write $\tau > \sigma$ if $\sigma \subset \overline{\tau}$ (closure of $\tau$). 
 
A real valued function $f\colon K\rightarrow \bR$ on $K$ is a discrete Morse function if for any $p$-cell $\sigma \in K$, there is at most one 
$(p+1)$-cell $\tau$ containing $\sigma$ in its boundary such that $f(\tau)\leq f(\sigma)$ and there is at most one 
$(p-1)$-cell $\nu$ contained in the boundary of $\sigma$ such that $f(\nu)\geq f(\sigma)$.  A $p$-cell $\sigma \in K$ is a critical $p$-cell of $f$ if
$f(\nu)< f(\sigma)< f(\tau)$ for any $(p-1)-$face $\nu$ and $(p+1)-$coface $\tau$ of $\sigma$.  A cell is regular if it is not critical.
Regular cells appear in disjoint pairs. They are usually denoted by arrows pointing in the direction of increasing dimension and decreasing function values and form the gradient vector field

$$
V = \{(\sigma\to \tau) \mid \dim \sigma=\dim \tau-1, \sigma<\tau, f(\sigma)\geq f(\tau)\}.
$$

In this notation, a cell is critical if and only if it is neither the head nor the tail of an arrow. 
\noindent

 

A gradient path or a $V$-path of dimension $(p+1)$ is a sequence of cells 
$$
\sigma_{0}^{(p)}\to \tau_{0}^{(p+1)}>\sigma_{1}^{(p)}\to \tau_{1}^{(p+1)}>\cdots \to \tau_{r}^{(p+1)}>\sigma_{r+1}^{(p)},
$$  
such that $(\sigma_{i}^{(p)}\to \tau_{i}^{(p+1)})\in V$ and $\sigma_{i}^{(p)} \neq \sigma_{i+1}^{(p)} < \tau_{i}^{(p+1)}$ for each $i = 0, 1, \ldots,  r$.  

A gradient path is represented by a sequence of arrows. Since function values along a gradient path of length more than two decrease, 
gradient paths cannot form cycles, and every such collection of arrows on a cell complex that do not form cycles corresponds to a discrete Morse function.  
In applications of discrete Morse theory it often suffices to consider the gradient vector field underlying a given discrete Morse function 
instead of the specific function values.

In this paper, we work with discrete Morse functions, where the number of critical $i$-cells 
equals the $i$-th Betti number of the complex.  Such discrete Morse functions are called perfect (with respect to the given coefficient ring, 
which is fixed as $\bZ$ throughout the paper and suppressed in our notation).

Note that if $M$ is a closed, connected oriented $n$-manifold and $f$ is a perfect discrete Morse function on it, then it has a single critical $n$-cell and a single critical vertex. In addition, since 1-paths cannot split, every vertex is connected to the critical vertex by a unique path, and since paths of maximal dimension in a manifolds cannot merge, every cell of codimension $1$ is connected to the critical $n$-cell by a unique path starting in the boundary of the critical cell and ending in this cell. 



\section{Separating the components of the connected sum}

In this section we first discuss  how to group the critical cells i.e., which critical cells must belong to the same component.  We consider $M$ as the union of two submanifolds which we denote by $M-M_1$ and $M-M_2$ glued along a common boundary $S$ which bounds discs $D_1$ in $M_1$ and $D_2$ in $M_2$. In our construction, the critical $3$-cell will belong to $M-M_{1}$ and the critical $0$-cell to the $M-M_2$. Next we construct a separating sphere for the connected sum  with a certain arrow configuration on it (cf. \cite{NMH}).  
After grouping the critical cells the vector field will produce paths only from one component to the other.  Finally, we explain how to extend the vector fields to $M_1$ and 
$M_2$.

The first step in constructing the separating sphere $S$ is to decide which critical cells should belong to the same component. Since $M$ is a connected orientable closed $3$-manifold $H_3(M; \bZ)\cong \bZ$, and since $f$ is perfect there is a unique critical $3$-cell and a unique critical vertex.  Note  that the gradient paths of the discrete Morse function determine a Morse-Smale type decomposition of the manifold \cite{GN}. The union of $2$-paths starting in a given critical $2$-cell forms a geometric representative of a $2$-dimensional homology generator.  On the other hand, each of the two cofaces of a critical 2-cell is at the end of a unique 3-path starting in the critical 3-cell. These $3$-paths together with the critical $3$-cell at the begining and the critical $2$-cell at the end form a solid torus ($S^1\times D^2$) whose core represents the dual $1$-dimensional homology generator. These two classes intersect transversally with intersection number 1, and belong to the same component in the decomposition of $M$ (existence and uniqueness of such pairs follows from 
Poincare duality).  

In order to detect, which critical $1$-cell generates the $1$-dimensional homology class homologous to the core of the above solid torus, we consider all the $2$-paths that end at a given critical $1$-cell.  
Consider
the dual decomposition in which a critical $1$-cell corresponds to a critical $2$-cell.  Instead of counting the number of intersections of homology classes given by the critical cells 
and gradient paths (which might not be possible since paths might merge), we count the number of intersections of the geometric representatives of the dual homology classes, perturbed, if necessary, so that they intersect transversally.  It follows from duality and the structure of the cohomology ring of a connected sum that if the representatives of these dual homology classes intersect an odd number of times then the corresponding critical $1$ and $2$-cells belong to the same component. 

\begin{remark}
In the figures we demonstrate our constructions on cubical complexes since the subdivisions can be shown more clearly in cubical complexes then in simplicial complexes.
\end{remark}

\begin{theorem}\label{decompose3}
Let $M$ be a connected sum of closed, connected, oriented and triangulated  $3$-manifolds $M_1$ and $M_2$ with a $\bZ$-perfect discrete Morse function $f$. 
Then, after possibly some local subdivisions, we can find a separating $2$-sphere $S$ on $M$ such that $M$ is the union of triangulated manifolds
$M-M_1$ and $M-M_2$ with common boundary $S$. In addition the cells on $S$ are never paired with the cells on $M-M_1$.
\end{theorem}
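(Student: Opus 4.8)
The plan is to convert the homological grouping of the critical cells described above into an explicit separating surface, and then to surger the gradient vector field along it; I expect the classification of the resulting surface as a sphere to be the main difficulty. First I would fix the partition of the critical cells. Since $f$ is perfect and $M$ is closed, connected and oriented, there is a unique critical vertex and a unique critical $3$-cell, while the numbers of critical $1$- and $2$-cells equal $b_1(M)=b_1(M_1)+b_1(M_2)$ and $b_2(M)=b_1(M)$. Using Poincar\'e duality together with the intersection-number criterion from the preceding discussion, I would match each critical $2$-cell with its dual critical $1$-cell and assign each such pair to the summand whose homology it represents, placing the critical $3$-cell in $M-M_1$ and the critical vertex in $M-M_2$. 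This records, for each homology generator, on which side of the connected-sum neck it should lie.

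Second, I would build a candidate separating surface $S$. Collecting the geometric representatives attached to the $M_1$-generators, namely the $2$-cycles swept out by the $2$-paths and the core circles of the dual solid tori, together with the critical vertex, I would form a subcomplex lying in the intended $M-M_2$ side, and similarly a subcomplex containing the critical $3$-cell on the $M-M_1$ side. The surface $S$ would then be the interface between regular neighborhoods of these two subcomplexes; after the local subdivisions permitted by the statement, I would arrange $S$ to be a genuine subcomplex realizing $M=(M-M_1)\cup_S(M-M_2)$ with $S$ as their common boundary.

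Third, I would repair the gradient field near $S$ so that no cell of $S$ is paired with an interior cell of $M-M_1$. Wherever an arrow of $V$ runs from an $S$-cell into $M-M_1$, I would redirect the pairing toward $M-M_2$ (or leave the cell unpaired), using the standard matching moves. The delicate point is that a gradient field is precisely an acyclic matching, so these redirections must not create a closed $V$-path; I would control this by performing the moves in the direction of the flow and re-indexing $f$ compatibly, simultaneously checking that the number of critical cells in each dimension is preserved, so that perfectness survives.

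Finally, I would prove that $S$ is a $2$-sphere. Being a separating, hence two-sided, codimension-one submanifold of an oriented closed $3$-manifold, $S$ is a closed orientable surface, and connectedness of the neck makes it connected; the real content is that its genus vanishes. I would extract this from a Mayer--Vietoris computation for $M=(M-M_1)\cup_S(M-M_2)$ which, once combined with the grouping of the first stage and the duality between $1$- and $2$-dimensional classes special to dimension three, forces $b_1(S)=0$ and hence $S\cong S^2$. This last step is where I anticipate the \emph{main obstacle}: it is exactly the place where the argument exploits three-dimensional Poincar\'e duality, and it is the reason the construction does not generalize directly to higher dimensions.
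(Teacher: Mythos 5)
Your overall strategy matches the paper's: group the critical $1$- and $2$-cells by the intersection/duality criterion, let $M-M_1$ be built from the critical $3$-cell together with the $3$-paths running to the critical $2$-cells of one group and the $2$-paths ending at the corresponding critical $1$-cells, take $S$ to be the resulting interface, and finally identify $S$ as a sphere by an Euler-characteristic/homology count. Your last step is essentially what the paper does, only more concretely: the restriction of $V$ to $M-M_2$ has exactly $1$, $b_1(M_1)$, $b_2(M_1)$ critical cells in dimensions $0,1,2$, so Forman's formula gives $\chi(M-M_2)=1$, and doubling $M-M_2$ yields $\chi(\partial(M-M_2))=2$; the sphere classification is therefore not the main difficulty you anticipate.

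The genuine gap is in your third step. You propose to eliminate arrows pointing from $S$ into $M-M_1$ by ``redirecting the pairing toward $M-M_2$ (or leaving the cell unpaired), using the standard matching moves.'' Leaving a cell unpaired creates a new critical cell and destroys perfectness outright, and redirecting a pairing requires an unpaired face or coface on the $M-M_2$ side, which generically does not exist; moreover, rerouting an acyclic matching locally while preserving both acyclicity and the critical-cell count is exactly the problem, not its solution. The paper's entire technical content lies here: the offending configurations arise when distinct $3$-paths of $M-M_1$ share a $2$-path or a $1$-path in their common boundary (or meet in a non-manifold edge or vertex, or when a critical cell of the other group lands on the boundary), and the remedy is to \emph{locally subdivide} the star of the shared cells --- bisecting $1$- and $2$-cells --- and then extend the vector field to the newly created cells by an explicit pairing scheme. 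The subdivision is what manufactures the free cells needed to absorb the re-pairings without new critical cells, and it simultaneously repairs the non-manifold points of $\partial(M-M_1)$, an issue your proposal does not address at all (your candidate boundary need not be a surface before these modifications). This is why the theorem statement includes the clause ``after possibly some local subdivisions''; without that mechanism your argument does not go through.
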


\begin{proof}
Let $C$ be the set of critical $1-$ and $2-$cells of $f$ that belong to the same part of $M$,
and $P_{2}$ be the set of $2$-paths that end at the critical $1$-cells in $C$.  

Let $M-M_1$ contain the unique critical 
$3$-cell, the $3$-paths from the critical $3$-cell to the $2$-cells in $C$ and the 2-paths in $P_2$. Let $S$ be the boundary of $M-M_1$, and let $M-M_2$ denote the closure of the remaining part of $M$.  

In the following lemmas and the remaining part of the proof of the theorem
we modify these parts but by abuse of notation we keep calling them $M-M_1$ and $M-M_2$. Note that a cell and the star of a cell will be a closed cell or star, respectively,  in the remaining part of the proof.



\begin{lemma}\label{3form2}
If different $3$-paths meet along a common $2$-path in the boundary of both then, after some local subdivisions, these two $3$-paths can be separated so that their closures do not contain the $2$-path any more.
\end{lemma}

\begin{proof}
Let $\gamma$ be the common $2$-path, $R$ the union of stars of cells in $\gamma$ and $R'=R \cap (M-M_1)$. 
To separate the  $3$-paths, we follow the steps below in the given order (see also Figure ~\ref{fig:critical3subdivide}):
\begin{enumerate}
\item We first bisect all the $1$-cells in $R'$ that intersect $\gamma$ at a single vertex.
\item We then extend the vector field to the subdivided cells by pairing each vertex bisecting a $1$-cell either with its coface in the star of $\gamma$ in case the coface is not paired with a  $0$-cell on $\gamma$, or with its coface outside the star of $\gamma$ if it is paired with a $0$-cell on $\gamma$.
\item Next we bisect the $2$-cells in $R'$ by connecting the newly introduced vertices in Step (i).  
\item We pair the $1$-cells bisecting the $2$-cells with their cofaces that intersect with $\gamma$.
\item We pair the remaining $2$-cells with their cofaces in the star of $\gamma$ in the subdivided $M-M_1$.
\end{enumerate}

Figure ~\ref{fig:critical3subdivide} shows an example. The blue arrows denote the $3$-paths that end at a critical $2$-cell in $C$ and contain 
the $2$-path $\gamma$ on their boundary.  The front and back faces of these $3$-paths are contained in the boundary of $M-M_1$. The arrow pointing from the face of $\gamma$ into $\gamma$ is an arrow pointing from the  boundary of $M-M_1$ into its interior.
The figure on the right shows the result of the separation of these $3$-paths where 
the red arrows denote the extension of the vector field.  The cells that are colored gray also belong to 
a part of the resulting boundary of  $M-M_1$. Since $M-M_1$ contains only cells in $3$-paths starting in the critical 3-cell and ending in the critical $2$-cells in $C$ and at the
$2$-cells in $P_2$, the 
$2$-cell $\gamma$ is not in its boundary any more, and the arrow pointing inwards has been eliminated. 

\begin{figure}[hbtp]
				\centering
				\includegraphics[width=.90\textwidth,height=.3\textheight]{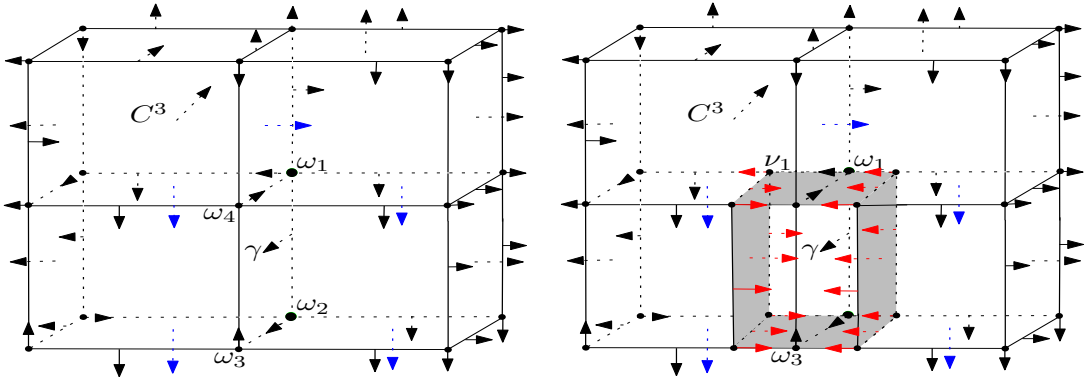}
				\vspace{-3.em}
				\caption{A separation of the $3$-paths that contain the $2$-path $\gamma$ on their common boundaries.}
				\label{fig:critical3subdivide}
\end{figure}

\end{proof}

\begin{lemma}\label{3form3}
Two $3$-paths that meet along a common $1$-path in the boundary of both can, after some local subdivisions, be separated.   
\end{lemma}

\begin{proof}
Let $S$ be the star of the cells on an interior $1$-path $\mu$ in $M-M_1$ that is in the common boundary of different $3$-paths in $M-M_1$ and let 
$S'=S\cap (M-M_1)$. 
Let $\sigma$ and $\sigma'$ be the initial and terminal vertices of $\mu$ in $M-M_1$.  
In order to separate the $3$-paths, we subdivide $S'$ as in the following way (see also Figure ~\ref{fig:critical14}):

\begin{enumerate}
\item We bisect all the $1$-cells in $S'$ that intersect $\mu$ at a single vertex and
pair the vertices bisecting the $1$-cells with their cofaces in the star of $\mu$ if $\sigma'$ is
not paired with a $1$-cell in $S'$.  
\item If $\sigma'$ is paired with a $1$-cell $\lambda$ in $S'$, then we pair the vertex bisecting $\lambda$ with its unpaired coface and 
pair the remaining vertices as in step(i).
\item We bisect the $2$-cells in $S'$ by connecting the newly introduced vertices in step(i). 
\item We extend the vector field to the subdivided cells by pairing the remaining unpaired $1$-cells with their cofaces that intersect $\mu$.
\item Finally, we pair the remaining $2$-cells with their cofaces in the star of $\mu$ obtained after the bisections in $M-M_1$.
\end{enumerate}

Note that all the $3$-paths in the star of $\mu$ end at regular $2$-cells at the end of above steps, and therefore do not belong to $M-M_1$.  
Thus the $3$-paths are separated in the sense that $\mu$ is not contained in their boundary, and is not contained in the boundary of $M-M_1$.  Figure ~\ref{fig:critical14} is an illustration of this separation process, 
where the orange colored arrows represent the $2$-paths in $P_2$ that end at the orange colored critical $1$-cell in $C$ in the top right corner, and blue colored 
arrows denote the $3$-paths that end at the $2$-cells in $P_2$ and admit the $1$-path $\mu$ on their common boundary.  The front and back faces 
on the boundary of these $3$-paths on the left figure belong to the boundary of $M-M_1$, so the arrow from $\sigma$ to $\mu$ points from the boundary of $M-M_1$ into its interior. 
The figure on the right shows the separation of these $3$-paths 
such that the boundary of these paths does not contain $\mu$ any more.  The gray colored cells on the right figure also belong to the 
boundary of $M-M_1$.
 
\begin{figure}[hbtp]
				\centering
				\includegraphics[width=.8\textwidth,height=.25\textheight]{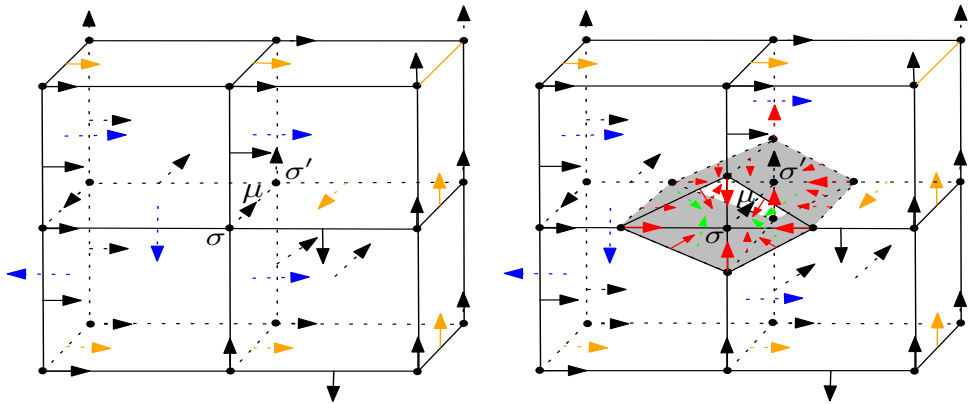}
				\caption{A separation of the $3$-paths that contain a $1$-path on their common boundary.}
				\label{fig:critical14}
\end{figure}

\end{proof}

To complete the proof of the theorem, let $M-M_1$ and $M-M_2$ be the subcomplexes of $M$ obtained after applying Lemma ~\ref{3form2} and Lemma ~\ref{3form3} to   
separate all the $3$-paths which give rise to the existence of common boundary cells in $M-M_1$. It follows that none of the cells on the boundary of $M-M_1$ are paired with interior cells.  

However, the boundary may not be a manifold, that is, there may be some non-manifold 
edges and vertices on the boundary since different $3$-paths may have some edges or vertices on their common boundary.
Moreover, there may be some critical cells which belong to $M-M_2$ on the resulting boundary.

To get rid of a non-manifold cell or a critical cell on the boundary, we bisect the cells in its star. In this way we separate the $3$-paths
that share a non-manifold cell on their boundary or push a critical cell into $M-M_2$.  Then we extend the vector field to the subdivided cells as in 
Lemma~\ref{3form2} and Lemma~\ref{3form3}. In Figure~\ref{fig:nonmanifoldedge} the $3$-paths denoted by blue arrows that intersect in the non-manifold edge denoted by $\tau$ are separated in this way.  The boundary of the paths denoted by blue arrows represents a part of the boundary of $M-M_1$ on the left figure. On the right
the gray colored cells with the boundary of the blue $3$-paths denote a part of the boundary of $M-M_1$, and the red and green arrows show the extension of the vector field to the subdivided cells.

\begin{figure}[hbtp]
\centering
\includegraphics[width=.8\textwidth,height=.15\textheight]{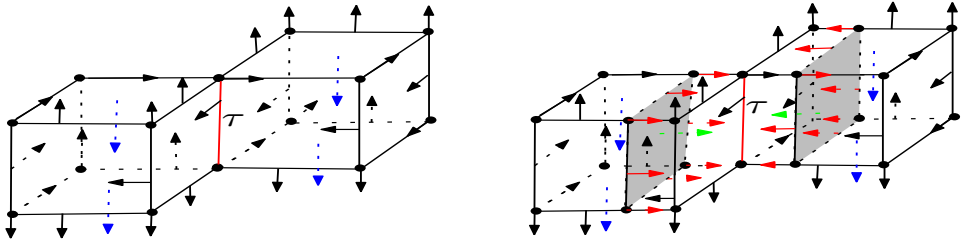}
\caption{A non-manifold edge $\tau$ on the boundary and a separation of the $3$-paths meeting at $\tau$.}
\label{fig:nonmanifoldedge}
\end{figure}

After these modifications the resulting boundary of $M-M_1$ is a $2$-manifold such that none of the cells on it are paired with interior cells.  
Let $V$ be the discrete gradient vector field on $M$ obtained after the necessary subdivisions in the above steps.
Observe that both $M-M_1$ and $M-M_2$ are $3$-manifolds with boundary,  and the restriction of $V$ to $M-M_2$ has no boundary critical cells.  
The numbers of critical cells of $V|_{M-M_2}$ are 
\begin{eqnarray*}
m_0(V|_{M-M_2}) &=& b_0(M_{1})=1,\\
m_1(V|_{M-M_2}) &=& b_1(M_{1}),\\
m_2(V|_{M-M_2}) &=& b_2(M_{1}).
\end{eqnarray*} 


Let $2(M-M_2)$ denote the double of $M-M_2$,  which is a closed $3$-manifold obtained by gluing the two copies of $M-M_2$ along the 
boundaries by the identity map.  Thus we have 
$$	
0=\chi(2(M-M_2)) =2\chi(M-M_2)-\chi(\partial(M-M_2)) 
$$	
and by \cite[Corollary 3.7]{forman1}, we have
$$
\chi(M-M_2) = 1- b_1(M_{1})+b_2(M_{1})=1.
$$
The above equations together imply that $\chi(\partial(M-M_2))=2$.  Since $\partial(M-M_2)$ is a closed, connected, 
oriented $2$-manifold, by classification of surfaces, it is a $2$-sphere $S^{2}$.

Finally, the sphere $S=S^2$ is a separating sphere for the two components $M-M_{1}$ and $M-M_{2}$ in the connected sum. 

\end{proof}

\section{Decomposing the perfect discrete Morse function}
Let $\widetilde{f}$ be a perfect discrete Morse function corresponding to the discrete vector field $V$ on the locally modified $M$ which  
coincides with $f$ except in a neighborhood of the separating sphere $S$.  
Now we can decompose $\widetilde{f}$ into perfect discrete Morse functions on the summands $M_1 \cong (M-M_2)\cup D_1$ and $M_2 \cong (M-M_1)\cup D_2$.


\begin{theorem} \label{extend3}
Given a perfect discrete vector field $V$ on $M=M_1\# M_2$ and a separating sphere $S=\partial (M-M_1)=\partial(M-M_2)$ such that there are no arrows pointing from $S$ into $M-M_1$, we can extend $ V|_{M-M_{2}}$ and $ V|_{M-M_{1}}$ to $M_{1} $ and $M_{2}$ as perfect discrete gradient vector fields which coincide with $V$ everywhere except possibly on a neighborhood of the separating sphere.
\end{theorem}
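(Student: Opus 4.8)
The plan is to close off each submanifold with a disc and then extend the gradient vector field across that disc so that the resulting field on the closed summand is again perfect. Consider $M_2 \cong (M-M_1)\cup D_2$. By the hypothesis, there are no arrows pointing from the separating sphere $S$ into $M-M_1$, so the restriction $V|_{M-M_1}$ has all of its boundary arrows pointing \emph{outward} across $S$ (into $M-M_2$). This is exactly the boundary condition that allows us to cap off cleanly: we attach a disc $D_2$ along $S$ and build a gradient field on $D_2$ whose only critical cell is a single $0$-cell, while every other cell of $D_2$ is paired so as to absorb the outgoing arrows from $S$.

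First I would analyze the critical cells on each side. In the proof of Theorem~\ref{decompose3} the critical cells were grouped so that $M-M_2$ carries the critical $0$-cell together with the $1$- and $2$-cells matched by the intersection-number argument, giving $m_0=1$, $m_1=b_1(M_1)$, $m_2=b_2(M_1)$ and no critical $3$-cell; dually, $M-M_1$ carries the critical $3$-cell and the complementary critical $1$- and $2$-cells, with no critical $0$-cell. The idea is that $D_1$, when attached to $M-M_2$ to form $M_1$, must supply a single critical $3$-cell (and nothing else) so that $M_1$ acquires Morse numbers $(1,b_1(M_1),b_2(M_1),1)$, matching the Betti numbers of the closed oriented $3$-manifold $M_1$; symmetrically, $D_2$ attached to $M-M_1$ must supply a single critical $0$-cell so that $M_2$ becomes perfect.

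Next I would construct the field on the cap explicitly. Since $S=S^2$, the disc $D_2$ is a triangulated (or cubulated) $3$-ball bounded by $S$. Because $V|_{M-M_1}$ has no arrows entering from $S$, every boundary cell of $S$ is free to be used as the head of a new arrow pointing into $D_2$. I would coherently collapse the $3$-ball $D_2$ onto the single critical $0$-cell: choose the critical vertex in $D_2$, then define a discrete vector field on $D_2$ realizing an elementary collapse of the ball to a point, pairing each cell of $S$ with a cell of $D_2$ and each interior cell with a higher-dimensional partner, leaving exactly one unpaired $0$-cell. The dual construction caps $M-M_2$ with $D_1$ using a field that collapses the dual ball to a single critical $3$-cell. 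The key point is that the glued field has no $V$-paths that close up into cycles: the original paths in $M-M_1$ all flow outward toward $S$, the new paths in $D_2$ all flow inward toward the critical $0$-cell, and since nothing flows back out of $D_2$ across $S$, no cycle can form; hence the combined field is a genuine gradient vector field of some discrete Morse function $\widetilde f$ on $M_2$, perfect by the cell count.

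The main obstacle I expect is verifying that the cap field can be chosen to mesh with $V|_{M-M_1}$ without introducing spurious critical cells on $S$ and without creating closed $V$-paths, i.e.\ producing an honest acyclic matching on the combined complex $M_2$. Concretely, one must ensure that the collapsing field on $D_2$ can be built so that each boundary cell of $S$ receives (or gives) exactly the pairing needed to stay regular, which requires the boundary $S$ together with the outward arrows to admit an extension to a perfect field on the ball; here one invokes that a triangulated $3$-ball is collapsible and that the prescribed boundary behavior (all arrows outward, no critical boundary cells) is compatible with a collapse to a point. I would handle this by an explicit inductive collapse from $S$ inward, using the acyclicity criterion for gradient vector fields (no closed $V$-paths) together with Forman's correspondence between acyclic matchings and discrete Morse functions to conclude that $\widetilde f$ exists and is perfect on each summand, and coincides with $f$ away from a neighborhood of $S$.
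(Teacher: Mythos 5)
Your proposal follows essentially the same route as the paper: cap $M-M_2$ with a ball carrying a single new critical $3$-cell (the paper does this via Forman's extension lemma, using that $V|_{M-M_2}$ has no boundary critical cells), and cap $M-M_1$ with a disc $D_2$ that is a cone over $S$ on a single interior vertex $\omega$, which is then the unique critical $0$-cell. The only point to tighten is your phrase ``pairing each cell of $S$ with a cell of $D_2$'': in the paper's explicit matching only the cells of $S$ that are critical for $V|_{M-M_1}$ receive new partners in $\mathrm{int}(D_2)$, while cells already paired within $S$ keep their pairing and it is their cone cofaces that get matched to each other --- which is exactly what resolves the ``meshing'' obstacle you flag at the end.
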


\begin{proof}
We begin the proof with an observation on $2$-paths between critical $2$- and $1$-cells.  Consider $1$- and $2$-dimensional homology classes that are dual to 
critical $2$- and $1$-cells respectively.  Recall that a critical $2$- and $1$-cell pair belong to the same part if their duals intersect an odd number of times.  If their duals intersect an even number of times, then there cannot be a $2$-path between the critical cells.  To see this one should observe that, 
if there is a path between a critical $2$-cell  and a critical $1$-cell, then the $2$-dimensional classes obtained by following the $2$-paths emanating from a critical $2$-cell 
and considering all the $2$-paths that end at a critical $1$-cell are homologous (the number of intersections of these spheres with 
the core of the solid torus, the dual $1$-dimensional homology class, are the same).

We first  extend $ V|_{M-M_{2}}$ to a discrete gradient vector field on $M_{1}=(M-M_{2})\cup_{S}D_{1}$ where $D_1$ is a triangulated 
$3$-disk with boundary $S$ and with a unique interior vertex.  Let $\Delta$ be a $3$-cell in $D_1$.  Obviously $(M_{1}-\text{int}(\Delta))$ collapses to $(M-M_2)$.  
By the construction of $S$ in Theorem ~\ref{decompose3}, $ V|_{M-M_{2}}$ has no boundary critical cells.  
Therefore, we can extend $ V|_{M-M_{2}}$ to a discrete gradient vector field $V_1$ on $M_{1}-\text{int}(\Delta)$ 
without creating any extra critical cells \cite[Lemma 4.3]{forman1}.  Let $g$ be a discrete Morse function corresponding 
to $V_1$ on $M_{1}-\text{int}(\Delta)$.  Then the function $\widetilde{g}:M_1\rightarrow \bR$ defined as 
	$$\widetilde{g}(\alpha)=\left\{\begin{array}{lll}
		g(\alpha)&; & \alpha \in M_1-\textrm{Int}(\Delta)\\
		1+\textrm{max}\{g(\partial \alpha)\} &; & \alpha=\Delta
		\end{array}\right.$$
is a perfect discrete Morse function on $M_1$ with unique critical $3$-cell $\Delta$.


Next, we extend $ V|_{M-M_{1}}$ to a discrete gradient vector field on $M_{2}=(M-M_{2})\cup_{S}D_{2}$ where $D_2$ is a triangulated disk 
with boundary $S$ and with an interior vertex $\omega$, in the following way:

\begin{enumerate}
\item For each boundary critical cell $\alpha$ on $M-M_1$, we form a pair $(\alpha,\alpha')$ where $\alpha'$ is the coface of $\alpha$ in $\textrm{int}(D_2)$.
\item For each $(\sigma, \beta)\in  V|_{M-M_{1}}$, we form a corresponding pair $(\sigma', \beta')$ where $\sigma'$ and $\beta'$ are 
the cofaces of $\sigma$ and $\beta$ in $\textrm{int}(D_2)$, respectively.
\end{enumerate}

The vertex $\omega$ remains unpaired, and it is the unique critical $0$-cell of the obtained extension $V_2$ of $ V|_{M-M_{1}}$ to $M_2$.  
Clearly, $V_2$ is a discrete gradient vector field induced by a perfect discrete Morse function on $M_2$ with the unique critical $0$-cell $\omega$.

Figure~\ref{fig:sphere0vector} is an example of a discrete gradient vector field $(V|_{M-M_{1}})|_S$ 
where $v_{9}$ is a critical $0$-cell, $e_{1}$, $e_{2}$ and $e_{3}$ are critical $1$-cells, and $\sigma_{1}$,
$\sigma_{2}$, $\sigma_{3}$ and $\sigma_{4}$ are critical $2$-cells. 

\begin{figure}[hbtp]
		\centering
		\includegraphics[width=.6\textwidth,height=.4\textheight]{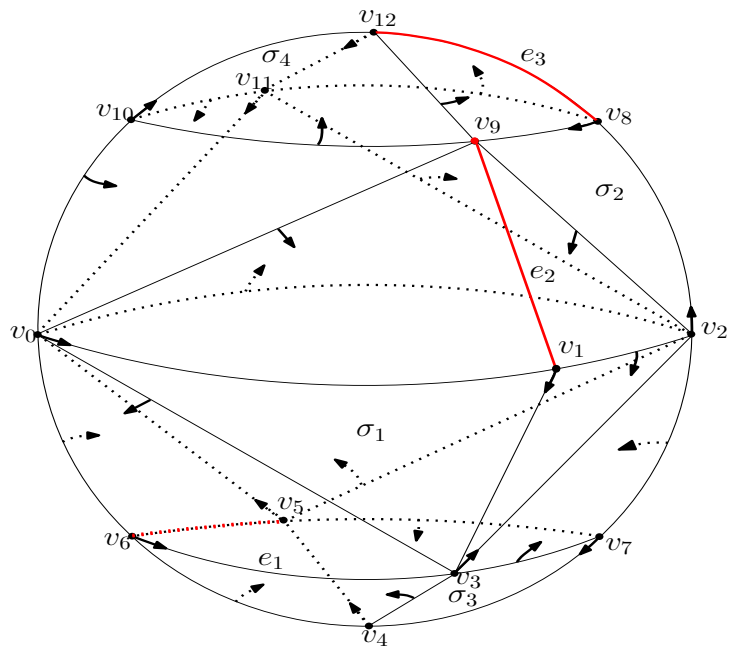}
		\caption{A discrete gradient vector field on a sphere $S^{2}$.}
		\label{fig:sphere0vector}
	\end{figure}
	
The vector field on Figure ~\ref{fig:sphere0extend} denotes the extension 
of $(V|_{M-M_{1}})|_S$ to $D_{2}$
with only one critical $0$-cell which is the vertex $\omega$.  On this extension,  $v_{9}$
is paired with the $1$-cell $[v_{9},\omega]$, $e_{1}$, $e_{2}$ and $e_{3}$ are paired with 
the $2$-cells $[v_{5}, v_{6}, \omega]$, $[v_{1}, v_{9}, \omega]$, $[v_{8}, v_{12}, \omega]$, respectively, 
and $\sigma_{1}$, $\sigma_{2}$, $\sigma_{3}$,
and $\sigma_{4}$ are paired with the $3$-cells $[v_{0}, v_{1}, v_{3}, \omega]$, $[v_{2}, v_{8}, v_{9}, \omega]$, 
$[v_{3}, v_{4}, v_{7}, \omega]$ and $[v_{10}, v_{11}, v_{12}, \omega]$, respectively.  
The pairs of the remaining interior cells depend on the pairs of the cells on 
their faces on $S$.

\begin{figure}[hbtp]
			\centering
			\includegraphics[width=.6\textwidth,height=.4\textheight]{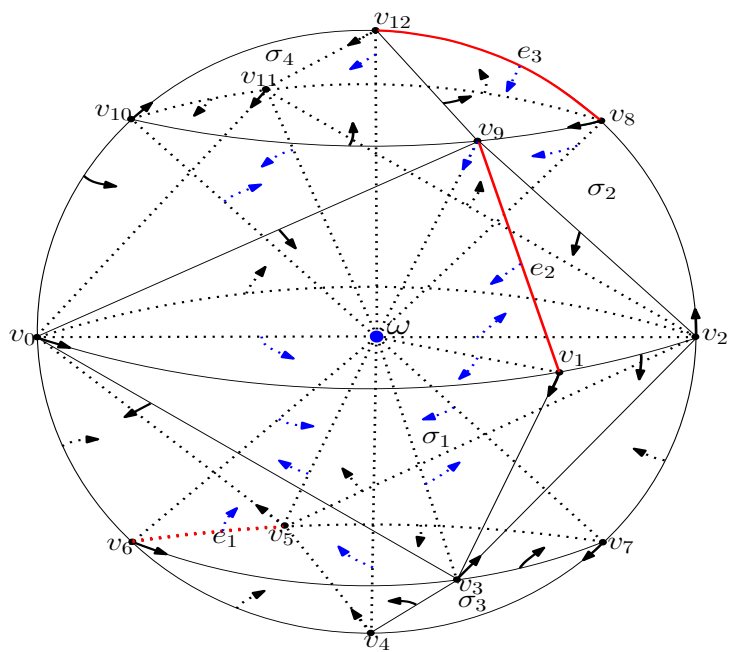}
			\caption{A perfect gradient vector field on $D^{3}$.}
			\label{fig:sphere0extend}
\end{figure}	
\end{proof} 

\begin{remark}
Note that we cannot extend the methods that we used in Theorem ~\ref{decompose3} and Theorem ~\ref{extend3} to
higher dimensional manifolds directly, since in the case of higher dimensional manifolds the separating sphere cannot be
classified only by the Euler characteristic.  Another problem that appears in higher dimensions is that the decomposition into a connected sum of prime factors is not necessarily unique, as in dimension $3$.  For example, 
The manifold $M=\mathbb{C} P^{2}\# \overline{\mathbb{C} P^{2}}\# \overline{\mathbb{C} P^{2}}$ is diffeomorphic   to $(S^{2}\times S^{2})\# \overline{\mathbb{C} P^{2}}$, while  $S^{2}\times S^{2}$ and $\mathbb{C} P^{2}\# \overline{\mathbb{C} P^{2}}$ 
are not even homotopy equivalent \cite{mcduff}.
\end{remark}

\bibliographystyle{amsplain}

\begin{thebibliography}{10}

\bibitem{KB}
K.~Adiprasito, B.~Benedetti \emph{Tight complexes in 3-space admit perfect discrete
morse functions}, European J. Combin. \textbf{45} (2015) 71–-84. 

\bibitem{Benedetti}
B.~Benedetti \emph{Smoothing discrete Morse theory}, Ann. Sc. Norm. Super. Pisa Cl.
Sci.(5), \textbf{16(2)} (2016) 335–-368.


\bibitem{GN}
G.~Jer\v se, N.M.~Kosta, \emph{Ascending and descending regions of a discrete morse
function}, Comput. Geom., \textbf{42(6-7)} (2009) 639–-651, 2009. 

\bibitem{JK}
G.~Jer\v se, N.M.~Kosta, \emph{Tracking features in image sequences using discrete
morse functions}, Image-A: Applicable Mathematics in Image Engineering,
\textbf{1(1)} (2010) 27–-32. 

\bibitem{GV}
G.~Mischaikow, V.~Nanda, \emph{ Morse theory for filtrations and efficient computation
of persistent homology}, Discrete Comput. Geom., \textbf{50(2)} (2013) 330–-353.

\bibitem{Rayala}
R.~Ayala, D.~Fern\'{a}ndez- Ternero, J.~A.~Vilches, \emph{Perfect discrete Morse functions on triangulated 3- manifolds}, CTIC (2012) 11--19. 





\bibitem{forman1}
R.~Forman, \emph{Morse theory for cell complexes}, 
Adv. Math. \textbf{134} (1998), 90--145.

\bibitem{forman2}
R.~Forman, \emph{A user's guide to discrete Morse theory}, 
Sem. Lothar. Combin. \textbf{48} (2002) B48c, 35 pp.











\bibitem{NMH}
H.~Varl{\i}, M.~Pamuk, N.~M.~Kosta, \emph{Perfect discrete Morse functions on connected sums}, Homology, Homotopy and Appl. (2018),
doi:10.4310/HHA.2018.v20.n1.a13.


\bibitem{mcduff}
D.~Mcduff, \emph{The structure of Rational and Ruled Symplectic $4$-manifolds}, Journal of the Amer. Math. Soc. \textbf{3(3)} 64, (1990) pp. 679--712.









\end{thebibliography}
\providecommand{\bysame}{\leavevmode\hbox
to3em{\hrulefill}\thinspace}
\providecommand{\MR}{\relax\ifhmode\unskip\space\fi MR }
 \MRhref  \MR
\providecommand{\MRhref}[2]{
  \href{http://www.ams.org/mathscinet-getitem?mr=#1}{#2}
 } \providecommand{\href}[2]{#2}

\end{document}